\documentclass[reqno]{amsart}

\usepackage{amsfonts,amssymb,amsmath,latexsym,indentfirst}
\usepackage{fontenc,enumerate}
\usepackage{graphics}
\usepackage{color,graphicx}
\usepackage[pdftex]{hyperref}
\usepackage{latexsym,wasysym,mathrsfs,enumerate}
\DeclareMathAlphabet{\mathpzc}{OT1}{pzc}{m}{it}

\newcommand{\vr}{\varrho}

\newcommand{\rr}{{\mathbb R}}

\newcommand{\lam}{{\lambda}}
\newcommand{\ff}{\varphi}

\newcommand{\h}{\mathscr{H}}

\newcommand{\A}{\mathscr{A}}

\newcommand{\dx}{D_x^{1/2}}
\newcommand{\hf}{{\dot{H}^{(1/2,1)}}}
\newcommand{\ihf}{{{H}^{(1/2,1)}}}
\newcommand{\lt}{{L^2}}

\newcommand{\fim}{\hfill$\square$\\ \\}

\begin{document}
\title[Sharp constant of an anisotropic GN inequality]
{Sharp constant of an anisotropic Gagliardo-Nirenberg-type inequality and
applications}

\subjclass[2010]{35Q35, 35Q53,46E35, 35A23}

\keywords{Fractional Sobolev-Liouville inequality; BO-ZK equation, Gagliardo-Nirenberg inequality}

\maketitle

\numberwithin{equation}{section}
\newtheorem{theorem}{Theorem}[section]
\newtheorem{lemma}[theorem]{Lemma}
\newtheorem{proposition}[theorem]{Proposition}
\newtheorem{corollary}[theorem]{Corollary}
\newtheorem{definition}[theorem]{Definition}
\newtheorem{remark}[theorem]{Remark}

\begin{center}
 {\large \textbf{Amin Esfahani}}
\end{center}

\begin{center}
  {School of Mathematics and Computer Science,
Damghan University, Damghan 36715-364, Iran;  and School of Mathematics,
Institute for Research in Fundamental Sciences  (IPM),   Tehran 19395-5746,
Iran.\\ E-mail: amin@impa.br, esfahani@du.ac.ir }
 \end{center}

\vskip.5cm

\begin{center}
 {\large \textbf{Ademir Pastor }}
\end{center}

\begin{center}
  { IMECC--UNICAMP,
Rua S\'ergio Buarque de Holanda, 651, Cidade Universit\'aria,
13083-859, Campinas--SP, Brazil.\\
 E-mail: apastor@ime.unicamp.br}
 \end{center}

\begin{abstract}
In this paper we   establish the best constant  of  an anisotropic
Gagliardo-Nirenberg-type inequality related to the
Benjamin-Ono-Zakharov-Kuznetsov  equation. As an application of our results,
we prove the uniform bound of solutions for such a equation in the energy
space.
\end{abstract}

\section{Introduction}

This paper is concerned with the best constant of the following
two-dimensional anisotropic \emph{Gagliardo-Nirenberg}-type inequality
\begin{equation}\label{inhomo-embed}
\|u\|_{L^{p+2}}^{p+2}\leq \vr{\|u\|}_{L^2}^{(4-p)/2}{\|\dx
u\|}_{L^2}^{p}{\|u_y\|}_{L^2}^{p/2}, \qquad u=u(x,y)\in H^{(1/2,1)},
\end{equation}
where $0< p<4$, $\vr$ is a positive constant, $L^q:=L^q(\rr^2)$ is the usual
Lebesgue space, $D_x^{1/2}$ represents the $1/2$-derivative operator in the
$x$-variable defined via its Fourier transform as
$\widehat{D_x^{1/2}u}(\xi,\eta)=|\xi|^{1/2}\widehat{u}(\xi,\eta)$, and
$H^{(1/2,1)}:=\ihf(\rr^2)$ denotes the fractional Sobolev-Liouville space
(see \cite{lizorkin}) as the closure of $C_0^\infty(\rr^2)$ endowed  with the
norm
\[
\|u\|_\ihf^2=\|u\|_\lt^2+\|\dx u\|_{\lt}^2+\|u_y\|_\lt^2.
\]

Inequality \eqref{inhomo-embed} is closely related with  the two-dimensional
generalized Benjamin-Ono-Zakharov-Kuznetsov (BO-ZK henceforth) equation
\begin{equation}\label{bo-zk}
u_t-\h u_{xx}+ u_{xyy}+\partial_x(u^{p+1})=0, \quad (x,y)\in\rr^2,\;\;t>0,
\end{equation}
where $\h$ stands for the Hilbert transform in the $x$-variable, defined by
\[
\mathscr{H}u(x,y,t)=\mathrm{p.v.}\frac{1}{\pi}\int_\rr\dfrac{u(z,y,t)}{x-z}\;dz.
\]
Indeed, in \cite{aaj}, by using \eqref{inhomo-embed}, the authors have
studied the existence   of solitary-wave solutions. It was proved that a
nontrivial solitary-wave solution of the form $u(x,y,t)=\ff(x-t,y)$ (with
velocity $c=1$) of \eqref{bo-zk}   exists if $0<p<4$. Assuming that $\ff$ has
a suitable decay at infinity, one see that $\ff$ should satisfy
 \begin{equation}\label{bozk-2}
 -\ff+\ff^{p+1}-\h\ff_x+\ff_{yy}=0.
 \end{equation}
In order to show the existence of solitary waves, the authors in \cite{aaj}
applied  the concentration-compactness principle \cite{lions} for the
following minimization problem
\begin{equation} \label{minimiz-constr-lp}
I_\lam=\inf\left\{ I(\ff) \;;\;\ff\in\ihf\;,\;J(\ff)=
\int_{\rr^2}\ff^{p+2}\;dxdy=\lam>0\right\},
\end{equation}
where $\lambda$ is a prescribed number and
\[
I(\ff)=\frac{1}{2}\int_{\rr^2}\left(\ff^2+\ff\h\ff_x+\ff_y^2\right)\; dxdy
=\frac{1}{2}\|\ff\|_{\ihf}^2.
\]
Inequality \eqref{inhomo-embed} shows, in particular, that $\ihf$ is
continuously embedded in $L^{p+2}$. Hence, the minimization problem
\eqref{minimiz-constr-lp} is well-defined.

\begin{remark}
Of course, one can consider solitary-wave solutions of the form
$u(x,y,t)=\ff(x-ct,y)$. In this case, such solutions exists for any $c$
positive (see \cite{aaj}).
\end{remark}

\begin{remark}\label{pinremark}
In order to functional $J$ be well-defined for all $u\in\ihf$, we assume here
and throughout the paper that $p=k/\ell$, where $k$ and $\ell$ are relatively
 prime integer numbers and $\ell$ is odd.
\end{remark}

Sharp constant for the Gagliardo-Nirenberg inequality
\[
\|u\|_{L^{p+2}(\rr^n)}^{p+2}\leq K_{\rm best}^{p+2}\|\nabla
u\|_{L^{2}(\rr^n)}^{np/2}\|u\|_{L^{2}(\rr^n)}^{2+p(2-n)/2}
\]
was first studied in Nagy \cite{na} in the case $n=1$ and then for all
$n\geq2$ (with $0<p<4/(n-2)$) in Weienstein \cite{we}.  The sharp constant
was obtained in terms of the ground state solution of the semilinear elliptic
equation
\[
\frac{pn}{4}\Delta\psi-\left(1+\frac{p}{4}(2-n)\right)\psi+\psi^{p+1}=0.
\]
More precisely,
\[
K_{\rm best}^{p+2}=\frac{p+2}{2\|\psi\|_{L^{2}(\rr^n)}^2}.
\]
Since then much effort has been expended on the study  of
Gagliardo-Nirenberg-type inequalities and its best constants (see, for
instance, \cite{ABLS,bfv,bbm,bm,cfl,dpv,mo,we} and references therein). Such
a effort can be justified in view of the crucial role of these inequalities
in the study of global well-posedness of the Cauchy problem associated with
several equations (see \cite{ABLS,cfl,farah-lp,farah-lp1,flp,HR,hr1,KM,mm,we}
and references therein). In many examples (especially for critical and
supercritical nonlinearities) the dichotomy ``global well-posedness $\times$
finite time blow up'' can be described using the best constant of a
Gagliardo-Nirenberg-type inequality.

Equation \eqref{bo-zk} was introduced in \cite{jcms}, \cite{lmsv} as a model
to describe the electromigration in thin nanoconductors on a dielectric
substrate. The BO-ZK equation \eqref{bo-zk} can also be viewed as a
two-dimensional generalization of the Benjamin-Ono (BO henceforth) equation
\begin{equation}\label{bo}
u_t-\h u_{xx}+\partial_x(u^{p+1})=0, \quad x\in\rr,\;\;t>0,
\end{equation}
which appears as a model for long internal gravity waves in deep stratified
fluids (see \cite{be}). It is well-known (see, for instance,  \cite{be} or
\cite{bl}) that solitary-wave solutions of the BO equation has an algebraic
decay at infinity. Thus, it is expected that solitary waves of \eqref{bo-zk}
has an algebraic decay in the propagation direction and, in view of the
second order derivative, an exponential decay in the transverse direction.
This was confirmed in \cite{aaj}. From the physical viewpoint this
anisotropic behavior implies that solitary waves has a limited stability
range e decay into radiation outside this range (see \cite{lmsv}).

The Cauchy problem associated with \eqref{bo-zk} was considered in \cite{cp}, \cite{cp1},
\cite{pams}, \cite{aaj}. In particular, local well-posedness was established
in $H^s(\rr^2)$, $s>2$ (see Theorem \ref{local} below). In \cite{aa,aaj} was
also demonstrated that a solitary-wave solution (with arbitrary positive
velocity) is nonlinearly stable if $0<p<4/3$ and nonlinearly unstable if
$4/3<p<4$. Other properties of the solutions, including unique continuation
principles, were also proved in \cite{cp} and \cite{blms}.

It should be noted that $p=4/3$ is a ``critical value'' for \eqref{bo-zk}. We
present two reasons for this nomenclature. The first one is related with the
orbital stability of solitary waves: as we already said, solitary waves are
stable if $0<p<4/3$ and unstable if $4/3<p<4$ (we do not know if they are
stable or not for $p=4/3$). The second one is related with the scaling
argument: if $u$ solves  \eqref{bo-zk} with initial data $u_0$ then
$$
u_\lambda(x,y,t)=\lambda^{2/p}u(\lambda^2x,\lambda y, \lambda^4t)
$$
also solves \eqref{bo-zk} with initial data
$u_\lambda(x,y,0)=\lambda^{2/p}u_0(\lambda^2x,\lambda y)$, for any
$\lambda>0$. As a consequence, if
${\dot{H}^{s_1,s_2}}:={\dot{H}^{s_1,s_2}}(\rr^2)$ denotes the homogeneous
anisotropic Sobolev space, we have
$$
\|u_\lambda(\cdot,\cdot,0)\|_{\dot{H}^{s_1,s_2}}=\lambda^{2s_1+s_2+2/p-3/2}\|u_0\|_{\dot{H}^{s_1,s_2}}.
$$
Thus, $L^2$ is the scale-invariant Sobolev spaces for the BO-ZK equation if
and only if $p=4/3$.

In order to describe our main result in the present paper, let us define
\[
S(u)=\frac{1}{2}\|u\|_\ihf^2-\frac{1}{p+2}J(u),
\]
where $J$ is given in \eqref{minimiz-constr-lp}. We  recall that a solution
$\ff\in\ihf$ of \eqref{bozk-2} is called a \textit{ground state}, if $\ff$
minimizes the action $S$ among all solutions of \eqref{bozk-2}. Our main
theorem reads as follows.

\begin{theorem}\label{teo-best}
Let $0<p<4$. Then the best   constant $\vr$ in the fractional
Gagliardo-Nirenberg inequality \eqref{inhomo-embed} is such that
\begin{equation} \label{besng}
\vr^{-1}=\frac{4-p}{2(p+2)}\left(\frac{p}{4-p}\right)^{3p/4}2^{p/2}\|\ff\|_\lt^p
= \frac{4-p}{2(p+2)}\left(\frac{p}{4-p}\right)^{p/4}(2d)^{p/2},
\end{equation}
where $\ff$ is a ground state solution of \eqref{bozk-2} and
$$
d=\inf\{S(u);\;u\in\Lambda\},
$$
with $\Lambda=\{u\in\ihf;\;u\neq0,\;S'(u)=0\}$.
\end{theorem}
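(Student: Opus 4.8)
The plan is to realize $\vr$ as the supremum of the scale-invariant Weinstein quotient
\[
W(u)=\frac{J(u)}{\|u\|_\lt^{(4-p)/2}\,\|\dx u\|_\lt^{p}\,\|u_y\|_\lt^{p/2}},
\]
so that $\vr=\sup\{W(u);\,u\in\ihf,\,u\neq0\}$ is exactly the smallest admissible constant in \eqref{inhomo-embed}, and then to identify an extremizer with a ground state of \eqref{bozk-2}. First I would record the three independent invariances of $W$: the amplitude scaling $u\mapsto\mu u$ and the two anisotropic dilations $u(x,y)\mapsto u(\lam x,y)$ and $u(x,y)\mapsto u(x,\si y)$. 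A short Fourier-side computation gives the precise homogeneity of each of the four quantities $\|u\|_\lt$, $\|\dx u\|_\lt$, $\|u_y\|_\lt$, $J(u)$ under these maps and shows $W$ is invariant under all three. Crucially, these three free parameters are exactly enough to normalize the coefficients appearing in the Euler--Lagrange equation of $W$, which up to rescaling is precisely \eqref{bozk-2}.

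Second, I would invoke the concentration--compactness analysis of \cite{aaj} for the minimization problem \eqref{minimiz-constr-lp}, which is equivalent to the present one by the above scalings, to guarantee that $\sup W$ is attained. After applying the three invariances to a maximizer, it may be taken to be a nontrivial solution $\psi$ of \eqref{bozk-2}, that is, $\psi\in\Lambda$, with $W(\psi)=\vr$.

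Third --- this is the computational heart --- I would derive the Pohozaev/Nehari relations valid for every $\psi\in\Lambda$ by differentiating $(\lam,\si)\mapsto S(\psi(\lam\cdot,\si\cdot))$ at $(1,1)$ and by pairing $S'(\psi)=0$ with $\psi$. Writing $m=J(\psi)$, the resulting three identities force
\[
\|\psi\|_\lt^2=\tfrac{4-p}{2(p+2)}\,m,\qquad \|\dx\psi\|_\lt^2=\tfrac{p}{p+2}\,m,\qquad \|\psi_y\|_\lt^2=\tfrac{p}{2(p+2)}\,m,
\]
and in particular $S(\psi)=\tfrac{p}{2(p+2)}\,m$.

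Finally I would assemble the constant. Substituting these three expressions into $W(\psi)$ collapses all the powers of $m$ to $W(\psi)=C_p\,m^{-p/2}$ for an explicit $p$-dependent constant $C_p$; since the exponent $-p/2$ is negative, $W$ is largest at the admissible solution of least $J$, equivalently of least action $S$, which is precisely the ground state $\ff$, so that $d=S(\ff)=\tfrac{p}{2(p+2)}J(\ff)$. Evaluating $W(\ff)=C_p\,J(\ff)^{-p/2}$, inverting, and eliminating $J(\ff)$ through $\|\ff\|_\lt^2=\tfrac{4-p}{2(p+2)}J(\ff)$ yields the first expression in \eqref{besng}; feeding in $2d=\tfrac{2p}{4-p}\|\ff\|_\lt^2$ converts it into the second. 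The main obstacle is the attainment-and-identification step of the second paragraph: proving that the supremum is achieved and that an extremizer rescales to a genuine solution of \eqref{bozk-2} rather than to a mere critical point of a constrained functional, which is exactly where the compactness input from \cite{aaj} is indispensable. The remaining work is the scaling bookkeeping and the elementary but delicate algebraic check that the two displayed forms of $\vr^{-1}$ indeed coincide.
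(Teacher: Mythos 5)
Your plan is correct and, once fleshed out, yields \eqref{besng}: the three identities you assert on $\Lambda$ are exactly Lemma \ref{lemsapre} rewritten in terms of $m=J(\psi)$, the collapse of the quotient to $W(\psi)=C_p m^{-p/2}$ on $\Lambda$ is right, and the final algebra does reproduce both expressions for $\vr^{-1}$. Structurally, though, you argue in the opposite direction from the paper. You follow Weinstein's attainment-first scheme \cite{we}: produce a maximizer of $W$, rescale it into a solution of \eqref{bozk-2}, then minimize $J$ (equivalently $S$) over $\Lambda$, which selects the ground state and makes it transparent that $\vr$ does not depend on which ground state is used. The paper never proves attainment as an input; it proves the two-sided inequality directly. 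For the hard direction it rescales an arbitrary $u$ by three parameters to $\omega$ with $K(\omega)=0$ and with $\|\omega\|_\lt$, $\|\omega_y\|_\lt$, $J(\omega)$ pinned to the ground-state values, then applies the constrained variational characterization of Lemma \ref{equivala2} (the ground state minimizes $\|\dx u\|_\lt$ over $\{K=0\}$, established through Lemmas \ref{equival} and \ref{equivala1} via the $I_\lambda$-minimizer, a Lagrange multiplier $\theta$, and the rescaling $w=\theta^{1/p}v$) to get $\|\dx\omega\|_\lt\geq\|\dx\ff\|_\lt$, hence $\A(u)\geq\A(\ff)$; the easy direction is testing $\A$ at $\ff$. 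That is what the paper means when it says Weinstein's classical method cannot be used directly here and must be replaced by scaling arguments.

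The one place where your outline conceals genuine work is the assertion that \eqref{minimiz-constr-lp} ``is equivalent to the present one by the above scalings.'' It is not, at least not formally: the three invariances of $W$ preserve neither the constraint $J=\lambda$ nor the functional $I$ (a sum of three squares, versus the product of powers in the denominator of $W$), so scale invariance alone does not transfer the compactness of \cite{aaj} to your problem. What does work --- and is the step you must actually write out --- is to minimize $I$ over each scaling orbit $\{\kappa u(\xi\,\cdot,\mu\,\cdot)\}$ subject to $J=\lambda$: an elementary computation shows this orbit infimum equals $c_p\,\lambda^{2/(p+2)}\,W(u)^{-2/(p+2)}$, so an $I_\lambda$-minimizer maximizes $W$; equivalently one can use a weighted AM--GM inequality with weights proportional to $\frac{4-p}{4}$, $\frac{p}{2}$, $\frac{p}{4}$ to compare the product with $\|u\|_\ihf^{p+2}$. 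This computation is precisely the role played in the paper by the normalization $K(\omega)=0$ together with Lemma \ref{equivala2}. With it, your attainment step (and hence the whole proof) closes; without it, the key claim remains an assertion rather than an argument.
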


\begin{remark}
Provided we know the existence of positive ground state solutions, Theorem \ref{teo-best} still holds if $p\in(0,4)$ is not a rational number  (see Remark \ref{pinremark}).
\end{remark}

We prove Theorem \ref{teo-best} following some ideas developed in \cite{cfl}
where the sharp constant for a  Gagliardo-Nirenberg-type inequality related
with Kadomtsev-Petviashvili-type equations was established. Because we are
dealing with anisotropic spaces, the classical method used in \cite{we}
cannot be directly used. This is overcame by using scaling arguments.

\begin{remark}
Uniqueness of ground state solutions for \eqref{bozk-2} seems to be a very
interesting and challenging issue. In view of the anisotropic nature of
\eqref{bozk-2}, it is not clear if the recent theory developed in \cite{fl}
and \cite{fls1} can be applied. Note, however, from the second equality in
\eqref{besng}, that $\vr$ does not depend on the choice of the ground state
(if there are many).
\end{remark}

As an application of inequality \eqref{inhomo-embed},  we shall prove the
uniform bound of solutions of \eqref{bo-zk}. More precisely, in the
subcritical and critical regimes, we have the following.

\begin{theorem}\label{uniform}
Let $u_0\in H^s(\rr^2)$, $s>2$, and $u\in C([0,T);H^s(\rr^2))$ be the
solution of \eqref{bo-zk}, associated with the initial value $u_0$. Then
$u(t)$ is uniformly bounded in $\ihf$, for $t\in [0,T)$, if one of the
following conditions hold:
\begin{enumerate}[(i)]
\item $0<p<4/3$; \item $p=4/3$ and
\begin{equation}\label{uni134}
\|u_0\|_\lt^4<\left(\frac{4}{27}\right)\|\ff\|_\lt^4,
\end{equation}
where $\ff$ is a ground state of \eqref{bozk-2}.
\end{enumerate}
\end{theorem}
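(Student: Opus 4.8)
The plan is to run a conservation-law argument: control the kinetic part of the $\ihf$-norm by the conserved energy and absorb the nonlinearity through the sharp inequality \eqref{inhomo-embed}. First I would record the two invariants of \eqref{bo-zk}. Writing the equation in Hamiltonian form $u_t=\pa_x(\h u_x-u_{yy}-u^{p+1})$, one checks that the mass $\|u(t)\|_\lt$ and the energy
\[
E(u)=\frac12\big(\|\dx u\|_\lt^2+\|u_y\|_\lt^2\big)-\frac1{p+2}\int_{\rr^2}u^{p+2}\,dx\,dy
\]
are (formally) conserved; for the $H^s$-solutions with $s>2$ provided by the local theory (Theorem \ref{local}) this is rigorous. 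Setting $A(t)=\|\dx u(t)\|_\lt^2+\|u_y(t)\|_\lt^2$ we have $\|u(t)\|_\ihf^2=\|u_0\|_\lt^2+A(t)$, so by mass conservation it suffices to bound $A(t)$ uniformly on $[0,T)$.

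From $E(u(t))=E(u_0)$ I would write $\tfrac12A(t)=E(u_0)+\tfrac1{p+2}\int_{\rr^2}u(t)^{p+2}\,dx\,dy$. Since $u^{p+2}\le|u|^{p+2}$, the last integral is at most $\|u(t)\|_{L^{p+2}}^{p+2}$, and applying \eqref{inhomo-embed} together with $\|u(t)\|_\lt=\|u_0\|_\lt$ gives
\[
\tfrac12A(t)\le E(u_0)+\frac{\vr}{p+2}\|u_0\|_\lt^{(4-p)/2}\,\|\dx u(t)\|_\lt^{p}\,\|u_y(t)\|_\lt^{p/2}.
\]
I would then fold the anisotropic product into $A$ by the elementary bound $\|\dx u\|_\lt^{p}\|u_y\|_\lt^{p/2}\le(\|\dx u\|_\lt^2+\|u_y\|_\lt^2)^{3p/4}=A^{3p/4}$, reducing everything to the scalar inequality
\[
\tfrac12A(t)\le E(u_0)+\frac{\vr}{p+2}\|u_0\|_\lt^{(4-p)/2}A(t)^{3p/4}.
\]

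Everything now hinges on the exponent $3p/4$. For $0<p<4/3$ one has $3p/4<1$, so $g(A):=\tfrac12A-\tfrac{\vr}{p+2}\|u_0\|_\lt^{(4-p)/2}A^{3p/4}\to+\infty$ as $A\to\infty$; hence the sublevel set $\{A\ge0:g(A)\le E(u_0)\}$ is bounded, and since $g(A(t))\le E(u_0)$ for every $t$ the quantity $A(t)$ stays below a finite constant depending only on $E(u_0)$ and $\|u_0\|_\lt$---no smallness is needed. For $p=4/3$ the exponent equals $1$ and the inequality becomes $\big(\tfrac12-\tfrac{\vr}{p+2}\|u_0\|_\lt^{4/3}\big)A(t)\le E(u_0)$, which yields a uniform bound exactly when the bracket is positive, i.e. $\tfrac{\vr}{p+2}\|u_0\|_\lt^{4/3}<\tfrac12$. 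Inserting the value of $\vr$ from Theorem \ref{teo-best}, which for $p=4/3$ reads $\vr^{-1}=\tfrac15\,2^{2/3}\|\ff\|_\lt^{4/3}$, turns this into precisely the condition $\|u_0\|_\lt^4<\tfrac{4}{27}\|\ff\|_\lt^4$ of \eqref{uni134}; moreover when the bracket is nonnegative the inequality at $t=0$ forces $E(u_0)\ge0$, so the resulting bound is meaningful.

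I expect the main obstacle to be the critical case $p=4/3$, where the whole threshold is produced by the constant bookkeeping in the last two steps: one must feed in the sharp value of $\vr$ and track the constant coming from the product estimate. Here a refinement is in fact available, since replacing the crude bound by the weighted arithmetic--geometric inequality $\|\dx u\|_\lt^{p}\|u_y\|_\lt^{p/2}\le 2^{p/2}3^{-3p/4}A^{3p/4}$, together with the fact that the ground state satisfies $\|\dx\ff\|_\lt^2=2\|\ff_y\|_\lt^2$, would relax \eqref{uni134} to $\|u_0\|_\lt<\|\ff\|_\lt$; the stated $4/27$ corresponds to the non-optimized constant. The remaining technical point is to justify rigorously that $E$ and the mass are conserved along the $H^s$-flow, which I would settle through the persistence/approximation argument attached to the local well-posedness theorem; once this and the continuity of $t\mapsto A(t)$ are in hand, the a priori inequalities above become genuine uniform bounds on $[0,T)$.
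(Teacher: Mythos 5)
Your argument is correct and is essentially the paper's own proof: both rely on conservation of mass and energy, the sharp inequality \eqref{inhomo-embed} together with $\|u(t)\|_\lt=\|u_0\|_\lt$, the crude product bound $\|\dx u\|_\lt^{p}\|u_y\|_\lt^{p/2}\le\|u\|_\hf^{3p/2}$, and the dichotomy $3p/4<1$ versus $3p/4=1$, with identical constant bookkeeping turning the positivity condition \eqref{12a} into \eqref{uni134}. Your closing refinement is also correct, though not needed for the stated theorem: since equality in the three-term AM--GM $x^2y\le\tfrac{4}{27}(x+y)^3$ occurs at $x=2y$, which by Lemma \ref{lemsapre}(iii) is exactly the balance $\|\dx\ff\|_\lt^2=2\|\ff_y\|_\lt^2$ satisfied by the ground state, the optimized estimate would indeed relax \eqref{uni134} to $\|u_0\|_\lt<\|\ff\|_\lt$, so the constant $4/27$ is an artifact of the non-optimized product bound rather than a sharp threshold.
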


In the supercritical regime, that is, for  $4/3<p<4$, additional conditions
on the initial data must be imposed. More precisely, we prove the following.

\begin{theorem}\label{uniform1}
Assume $4/3<p<4$. Suppose that $u_0\in H^s(\rr^2)$, $s>2$, satisfies
\begin{equation}\label{cond1}
\|u_0\|_{\lt}^{2(4-p)}\|u_0\|_{\dot{H}^{(1/2,1)}}^{2(3p-4)}<\left(\frac{4}{27}\right)^p
\|\ff\|_{\lt}^{2(4-p)}\|\ff\|_{\dot{H}^{(1/2,1)}}^{2(3p-4)}, \quad E(u_0)>0,
\end{equation}
and
\begin{equation}\label{cond2}
\|u_0\|_{\lt}^{2(4-p)}E(u_0)^{3p-4}<\left(\frac{4}{27}\right)^p
\|\ff\|_{\lt}^{2(4-p)}E(\ff)^{3p-4},
\end{equation}
where $\ff$ is a ground state solution of \eqref{bozk-2}, $E$ is the energy
defined in \eqref{third-condition}, and ${\dot{H}^{(1/2,1)}}$ is the
homogeneous fractional Sobolev-Liouville space with the norm
\[
\|u\|_{\dot{H}^{(1/2,1)}}^2=\|\dx u\|_{\lt}^2+\|u_y\|_\lt^2.
\]
Let $u\in C([0,T);H^s(\rr^2))$ be the solution of \eqref{bo-zk}, associated
with the initial value $u_0$. Then $u(t)$ is uniformly bounded in $\ihf$, for
$t\in [0,T)$. In addition, we have the bound
\begin{equation}\label{cond3}
\|u_0\|_{\lt}^{2(4-p)}\|u(t)\|_{\dot{H}^{(1/2,1)}}^{2(3p-4)}<\left(\frac{4}{27}\right)^p
\|\ff\|_{\lt}^{2(4-p)}\|\ff\|_{\dot{H}^{(1/2,1)}}^{2(3p-4)}.
\end{equation}
\end{theorem}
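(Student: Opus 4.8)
The plan is to run a Weinstein--Holmer--Roudenko type ``below the ground state'' argument that reduces the entire statement to the behaviour of the single scalar quantity $y(t):=\|u(t)\|_{\hf}^2$ along the flow. First I would record the two conservation laws for \eqref{bo-zk}: conservation of mass $\|u(t)\|_\lt=\|u_0\|_\lt$ and of the energy $E(u(t))=\tfrac12\|u(t)\|_{\hf}^2-\tfrac1{p+2}J(u(t))=E(u_0)$, both valid for the $H^s$-solutions ($s>2$) by the usual persistence argument. Applying \eqref{inhomo-embed} to $u(t)$, using mass conservation to replace $\|u(t)\|_\lt$ by $\|u_0\|_\lt$, and estimating the anisotropic product crudely by $\|\dx u\|_\lt^{p}\|u_y\|_\lt^{p/2}\le\|u\|_{\hf}^{3p/2}$, the energy identity becomes the closed scalar inequality
\[
g(y(t))\le E(u_0),\qquad g(y):=\tfrac12 y-\tfrac{\vr}{p+2}\|u_0\|_\lt^{(4-p)/2}\,y^{3p/4}.
\]

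Next I would analyse $g$. Since $4/3<p<4$ forces $3p/4>1$, the function $g$ vanishes at $0$, rises to a unique maximum at the point $y_\ast$ with $g'(y_\ast)=0$, and then decreases; from $g'(y_\ast)=0$ one gets the clean value $g(y_\ast)=\tfrac{3p-4}{6p}\,y_\ast$. The core computational task is to identify the two ground-state expressions in \eqref{cond1}--\eqref{cond2} with $y_\ast$ and $g(y_\ast)$. For this I would extract from $S'(\ff)=0$ the three scaling (Pohozaev--Nehari) identities obtained by differentiating $\lambda\mapsto S(\lambda\ff(\mu\,\cdot,\nu\,\cdot))$ in $\lambda,\mu,\nu$ at $(1,1,1)$; these yield $J(\ff)=\|\ff\|_\ihf^2$, the anisotropic relation $\|\dx\ff\|_\lt^2=2\|\ff_y\|_\lt^2$, and $\|\ff_y\|_\lt^2=\tfrac{p}{4-p}\|\ff\|_\lt^2$, whence $\|\ff\|_{\hf}^2=\tfrac{3p}{4-p}\|\ff\|_\lt^2$ and $E(\ff)=\tfrac{3p-4}{2(4-p)}\|\ff\|_\lt^2>0$. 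Substituting these together with the explicit value of $\vr$ from Theorem \ref{teo-best} into $g'(y_\ast)=0$ and $g(y_\ast)=\tfrac{3p-4}{6p}y_\ast$, a direct computation gives
\[
\|u_0\|_\lt^{2(4-p)}y_\ast^{3p-4}=\Big(\tfrac{4}{27}\Big)^p\|\ff\|_\lt^{2(4-p)}\|\ff\|_{\hf}^{2(3p-4)},
\]
together with the analogous identity in which $y_\ast$ and $\|\ff\|_{\hf}^2$ are replaced by $g(y_\ast)$ and $E(\ff)$. Hence hypothesis \eqref{cond1} is precisely $y(0)<y_\ast$ and hypothesis \eqref{cond2} is precisely $E(u_0)<g(y_\ast)$.

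With these identifications the conclusion follows from a continuity (bootstrap) argument. Because $E(u_0)<g(y_\ast)=\max g$, the sublevel set $\{y\ge0:\ g(y)\le E(u_0)\}$ is a disjoint union $[0,y_1]\cup[y_2,\infty)$ with $0<y_1<y_\ast<y_2$ (here the hypothesis $E(u_0)>0$ in \eqref{cond1} guarantees $y_1>0$, so the first component is nondegenerate); from $g(y(0))\le E(u_0)$ together with $y(0)<y_\ast$ the datum lies in that first component, $y(0)\le y_1$. Since $t\mapsto y(t)$ is continuous on $[0,T)$ (because $u\in C([0,T);H^s)$ and $\|\cdot\|_{\hf}\lesssim\|\cdot\|_{H^s}$) and $g(y(t))\le E(u_0)$ forbids $y(t)$ from entering the gap $(y_1,y_2)$, the trajectory stays trapped in $[0,y_1]$ for every $t\in[0,T)$. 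This yields $\|u(t)\|_{\hf}^2\le y_1<y_\ast$, which is exactly the bound \eqref{cond3}; combined with the conserved $L^2$-norm it gives the uniform bound of $u(t)$ in $\ihf$.

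The main obstacle I anticipate is the anisotropy. The sharp constant $\vr$ of Theorem \ref{teo-best} is attained at $\ff$ only in the borderline ratio $\|\dx\ff\|_\lt^2=2\|\ff_y\|_\lt^2$, a balance that the flow of \eqref{bo-zk} does not preserve; consequently the two directional norms must be merged into the single quantity $\|u\|_{\hf}$, and keeping exact track of the resulting constant is what produces the factor $(4/27)^p$ in \eqref{cond1}--\eqref{cond3}. Carrying out this bookkeeping cleanly requires the full triple of scaling identities for $\ff$ (rather than the single Pohozaev identity available in the isotropic case), and it is precisely in reconciling the ground-state-based constant of Theorem \ref{teo-best} with the dynamical threshold $g(y_\ast)$ that the computation is delicate. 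A secondary, more routine point is justifying the conservation of $E$ and $M$ and the continuity of $y(t)$ for merely $H^s$-regular ($s>2$) solutions.
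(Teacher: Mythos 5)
Your proposal is correct and follows essentially the same route as the paper: the sharp constant from Theorem \ref{teo-best} plus mass/energy conservation yields the scalar inequality $g(\|u(t)\|_{\hf}^2)\le E(u_0)$, the ground-state identities of Lemma \ref{lemsapre} identify \eqref{cond1}--\eqref{cond2} with the threshold conditions $y(0)<y_\ast$ and $E(u_0)<g(y_\ast)$, and a continuity argument traps the trajectory below the threshold. The only cosmetic difference is that the paper invokes B\'egout's lemma (Lemma \ref{begout}) for the trapping step, whereas you prove the equivalent sublevel-set continuity argument inline; the computations and conclusions coincide.
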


The proofs of Theorems \ref{uniform} and \ref{uniform1} will follow taking
into account the exact value of $\varrho$ in \eqref{besng}. Uniform bound in
general is not a triviality and relies on different aspects of the
differential equation in hand. Here, the conservation of the mass and the
energy play a crucial role.

\begin{remark}
It is easy to see that if $s>2$ and $u\in H^s(\rr^2)$, then  $u\in \ihf$.
Although we do not know about the local well-posedness in $\ihf$, the uniform
bounds in Theorems \ref{uniform} and \ref{uniform1} could lead a local
well-posedness result to a global one in the energy space.
\end{remark}

The remainder of the paper is organized as follows. In Section
\ref{bcons-sec} we prove that inequality \eqref{inhomo-embed} holds for some
positive constant $\vr$ and recall some useful properties of the ground state
solutions of \eqref{bozk-2}. In Section \ref{sec3} we prove Theorem
\eqref{teo-best} and establish the sharp constant \eqref{besng}. Finally, in
Section \ref{sec4}, we present the proofs of  Theorems  \ref{uniform} and
\ref{uniform1}

\section{The inequality \eqref{inhomo-embed} and properties of ground states}\label{bcons-sec}

We start this section by proving inequality \eqref{inhomo-embed}. Roughly
speaking, it follows as an application of the usual H\"older and Minkowski
inequalities combined with the one-dimensional fractional Gagliardo-Nirenberg
inequality:
\begin{equation}\label{gn1d}
\|f\|_{L^{r}(\rr)}^{r}\le C\,\|D_x^{\beta/2}
f\|_{L^2(\rr)}^{(r-2)/\beta}\|f\|_{L^2(\rr)}^{(2+r(\beta-1))/\beta},
\end{equation}
which holds for all $r\geq2$, $\beta\geq1$, and $f\in H^{\beta/2}(\rr)$ (see,
for instance, \cite{ABLS}). Here, for functions $f=f(x)$ of one real
variable, $D^{\beta/2}_x$ denotes the operator defined via Fourier transform
as $\widehat{D^{\beta/2}_xf}(\xi)=|\xi|^{\beta/2}\widehat{f}(\xi)$. In
addition, the smallest constant $C=C_{r,\beta}$ for which \eqref{gn1d} holds
is given by
\begin{equation}\label{gn1dbest}
C_{r,\beta}=\frac{r\beta}{2+r(\beta-1)}\left[ \left( \frac{2+r(\beta-1)}{r-2}
\right)^{1/\beta} \frac{1}{\|\Psi\|^2_{L^2(\rr)}}\right]^{(r-2)/2},
\end{equation}
where $\Psi$ is a solution of
$$
D^{\beta}\Psi+\Psi-|\Psi|^{r-2}\Psi=0.
$$

Now we are able to prove inequality \eqref{inhomo-embed}.

\begin{proposition}\label{bozk-ineq}
Let $0<p<4$. Then there exists $\vr>0$ such that inequality
\eqref{inhomo-embed} holds, for all $u\in\ihf$.
\end{proposition}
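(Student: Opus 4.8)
The plan is to derive the two-dimensional anisotropic inequality \eqref{inhomo-embed} from the one-dimensional fractional Gagliardo-Nirenberg inequality \eqref{gn1d} by estimating in one variable at a time and then recombining via Hölder and Minkowski. The natural starting point is to treat $y$ as a fixed parameter and apply the one-dimensional inequality \eqref{gn1d} in the $x$-variable. Taking $r=p+2$ and $\beta=1$ there, the hypotheses $r\geq2$ and $\beta\geq1$ reduce to $p\geq0$, and the inequality becomes
\[
\|u(\cdot,y)\|_{L^{p+2}(\rr)}^{p+2}\leq C\,\|\dx u(\cdot,y)\|_{L^2(\rr)}^{p}\,\|u(\cdot,y)\|_{L^2(\rr)}^{2},
\]
since with $\beta=1$ the exponents simplify to $(r-2)/\beta=p$ and $(2+r(\beta-1))/\beta=2$.

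Next I would integrate this pointwise-in-$y$ estimate over $y\in\rr$ and control each factor on the right by quantities involving the full anisotropic norms. The key tool here is Hölder's inequality in $y$, applied to the product of the two $y$-dependent factors $\|\dx u(\cdot,y)\|_{L^2(\rr)}^{p}$ and $\|u(\cdot,y)\|_{L^2(\rr)}^{2}$, choosing conjugate exponents adapted to $p$. One also needs to convert the $L^2$-in-$x$ slice-norms into genuine $H^{(1/2,1)}$-type quantities, which is where the transverse derivative $u_y$ and the Minkowski integral inequality enter: the slice $\|u(\cdot,y)\|_{L^2(\rr)}$ must be estimated through a one-dimensional Gagliardo-Nirenberg (or Sobolev) inequality in $y$ to bring in $\|u_y\|_{L^2}$ and produce the exponent $p/2$ appearing on $\|u_y\|_{L^2}$ in \eqref{inhomo-embed}. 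Bookkeeping of the exponents is what pins down the specific powers $(4-p)/2$, $p$, and $p/2$; one verifies that they are forced by the scaling homogeneity of \eqref{inhomo-embed} (both sides scale identically under $u\mapsto \mu\, u(\delta x,\nu y)$), which provides a useful consistency check on the arithmetic.

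The main obstacle will be handling the anisotropy cleanly: because $\dx$ acts only in $x$ while the derivative $u_y$ acts only in $y$, one cannot apply a single isotropic inequality, and the two directions must be separated and then recombined without losing control of the constant. The delicate point is interchanging the order of integration and the $L^2$-norms via Minkowski's integral inequality in the right direction (moving an outer $L^p_y$ norm inside an $L^2_x$ norm, or vice versa) so that the resulting mixed-norm estimates line up with the target product structure; doing this in the wrong order yields the reverse inequality. Since the statement only asserts existence of some $\vr>0$ (the sharp value being the subject of Theorem \ref{teo-best}), I would not optimize the constant at this stage—absorbing all one-dimensional Gagliardo-Nirenberg constants and Hölder constants into a single $\vr$ suffices, and the precise exponent matching, guaranteed by scaling, completes the argument.
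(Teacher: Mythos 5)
Your overall strategy (slice, apply the one-dimensional inequality \eqref{gn1d} in each variable, recombine via H\"older and Minkowski) is the same as the paper's, but the \emph{order} in which you apply the two one-dimensional inequalities creates a genuine gap: your argument cannot cover the range $2<p<4$. Your first step, \eqref{gn1d} in $x$ with $\beta=1$ and $r=p+2$, gives
\[
\|u\|_{L^{p+2}(\rr^2)}^{p+2}\le C\int_{\rr}\|\dx u(\cdot,y)\|_{L^2(\rr)}^{p}\,\|u(\cdot,y)\|_{L^2(\rr)}^{2}\,dy .
\]
To extract the factor $\|\dx u\|_{\lt}^{p}$ appearing in \eqref{inhomo-embed}, your H\"older step in $y$ must place the factor $\|\dx u(\cdot,y)\|_{L^2(\rr_x)}^{p}$ in $L^{2/p}(\rr_y)$, i.e.\ it needs the exponent $2/p\ge 1$. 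For $2<p<4$ no admissible pair of conjugate exponents exists: once the slice norm of $\dx u$ carries the power $p>2$, no H\"older inequality converts $\int_{\rr}\|\dx u(\cdot,y)\|_{L^2(\rr_x)}^{p}(\cdots)\,dy$ into $\bigl(\int_{\rr}\|\dx u(\cdot,y)\|_{L^2(\rr_x)}^{2}\,dy\bigr)^{p/2}(\cdots)$. Nor can this be patched by peeling off $\sup_y\|\dx u(\cdot,y)\|_{L^2(\rr_x)}^{p-2}$, since bounding that supremum (by the one-dimensional Agmon-type estimate in $y$) brings in the mixed derivative $\|\dx u_y\|_{\lt}$, which is not controlled by the $\ihf$-norm. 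So the obstruction is structural, not a matter of bookkeeping; the scaling check you invoke confirms consistency of exponents but cannot produce the inequality.

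The paper performs the two steps in the opposite order precisely to keep every intermediate power below $2$. It first applies \eqref{gn1d} in the $y$-variable with $\beta=2$, $r=p+2$, so the transverse derivative enters with the small power $p/2<2$; then H\"older in $x$ with exponents $4/(4-p)$ and $4/p$ (both admissible exactly when $0<p<4$) yields $\|u_y\|_{\lt}^{p/2}$ times the mixed norm $\bigl\|\|u\|_{L^2(\rr_y)}\bigr\|_{L^{2(p+4)/(4-p)}(\rr_x)}^{(p+4)/2}$; Minkowski swaps the order of the norms, and then \eqref{gn1d} in $x$ with $\beta=1$ and $r=2(p+4)/(4-p)$ produces per-slice powers $\tfrac{4p}{p+4}$ and $\tfrac{2(4-p)}{p+4}$ of $\|\dx u(\cdot,y)\|_{L^2}$ and $\|u(\cdot,y)\|_{L^2}$, both strictly less than $2$ for all $p<4$, so a final H\"older in $y$ closes the argument on the whole range. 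Your route, completed as you describe (H\"older in $y$ with exponents $2/p$ and $2/(2-p)$, then Minkowski and \eqref{gn1d} in $y$ with $\beta=2$, $r=4/(2-p)$, and a final H\"older in $x$), does prove the proposition, but only for $0<p\le 2$; to obtain the full statement you must start with the $y$-direction inequality as the paper does.
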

\begin{proof} The lemma is established for $C_0^\infty(\rr^2)$-functions  and then
limits are taken to complete the proof. By \eqref{gn1d}, with $\beta=2$,  we
deduce the existence of $C>0$ such that
\[
\|u(x,\cdot)\|_{L^{p+2}(\rr)}^{p+2}\leq
C\|u(x,\cdot)\|_{L^2(\rr)}^{\frac{p+4}{2}}\|u_y(x,\cdot)\|_{L^2(\rr)}^{\frac{p}{2}}.
\]
From this point on, the constant $C>0$ may vary from line to line. By using
the H\"older and Minkowski inequalities, it follows that
\begin{equation}\label{ineq-0}
\begin{split}
\|u\|_{L^{p+2}(\rr^2)}^{p+2}&
\leq C\int_\rr\|u(x,\cdot)\|_{L^2(\rr)}^{\frac{p+4}{2}}\;\|u_y(x,\cdot)\|_{L^2(\rr)}^{\frac{p}{2}}\; dx\\
&\leq C
\left\|\|u\|_{L^2(\rr_y)}\right\|_{L^{\frac{2(p+4)}{4-p}}(\rr_x)}^{\frac{p+4}{2}}\;
\|u_y\|_{L^2(\rr^2)}^{\frac{p}{2}}\\
&\leq C
\left\|\|u\|_{L^{\frac{2(p+4)}{4-p}}(\rr_x)}\right\|_{L^{2}(\rr_y)}^{\frac{p+4}{2}}\;
\|u_y\|_{L^2(\rr^2)}^{\frac{p}{2}}.
\end{split}
\end{equation}
Another application of \eqref{gn1d}, with $\beta=1$, reveals that
\begin{equation}\label{ineq-01}
\begin{split}
\|u\|_{L^{p+2}(\rr^2)}^{p+2}&
\leq C\left(\int_\rr\|\dx u(\cdot,y)\|_{L^2(\rr)}^{\frac{4p}{p+4}}\|u(\cdot,y)\|_{L^2(\rr)}^{\frac{2(4-p)}{p+4}}dy
\right)^{\frac{p+4}{4}}\|u_y\|_{L^2(\rr^2)}^{\frac{p}{2}}\\
&\leq C\|\dx
u\|_{\lt}^p\|u\|_{\lt}^\frac{4-p}{2}\|u_y\|_{L^2(\rr^2)}^{\frac{p}{2}}.
\end{split}
\end{equation}
This completes the proof.
\end{proof}

To proceed, we recall that the  existence of ground state solutions for
\eqref{bozk-2} was established in \cite{aaj}. In what follow in this section,
we prove some properties of the ground states, which will be useful to prove
Theorem \ref{teo-best}. Some of them were given in \cite{aaj}, but for the
sake of completeness we bring some details. Let us start by observing that $\mathscr{H}(x\ff_x)=x\mathscr{H}(\ff_x)$. Thus, since $\mathscr{H}$ is a skew-symmetric operator, we have
\[
-\int_{\rr^2}x\ff_x\mathscr{H}(\ff_x)\;dxdy=\int_{\rr^2}\ff_x\mathscr{H}(x\ff_x)\;dxdy=\int_{\rr^2}x\ff_x\mathscr{H}(\ff_x)\;dxdy,
\]
which implies that
\begin{equation}\label{zeroH}
\int_{\rr^2}x\ff_x\mathscr{H}(\ff_x)\;dxdy=0.
\end{equation}

\begin{lemma}\label{lemsapre}
Let $\ff$ be a ground state solution of \eqref{bozk-2}. Then,
\begin{itemize}
  \item[(i)] ${\displaystyle J(\ff)=\frac{p+2}{p}\|D^{1/2}_x\ff\|^2_\lt}$,
  \item[(ii)] ${\displaystyle \|\ff\|^2_\lt=\frac{4-p}{2p}\|D^{1/2}_x\ff\|^2_\lt}$,
  \item[(iii)] ${\displaystyle \|\ff_y\|^2_\lt=\frac{1}{2}\|D^{1/2}_x\ff\|^2_\lt}$.
\end{itemize}
\end{lemma}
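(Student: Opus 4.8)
The plan is to derive the three identities in Lemma \ref{lemsapre} from a collection of Pohozaev-type relations obtained by multiplying the ground-state equation \eqref{bozk-2} by suitable test functions and integrating. Since $\ff$ is a ground state, it solves $-\ff+\ff^{p+1}-\h\ff_x+\ff_{yy}=0$, and I would extract three independent scalar relations from this single equation.

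First I would multiply \eqref{bozk-2} by $\ff$ and integrate over $\rr^2$. Using $\int\ff\h\ff_x\,dxdy=\|\dx\ff\|_\lt^2$ (which follows from the Fourier-side identity $\widehat{\h\pa_x\ff}=|\xi|\widehat{\ff}$, i.e.\ $\h\pa_x=\dx\dx$ in the $x$-variable) and integration by parts on the $\ff_{yy}$ term, this yields the \emph{Nehari identity}
\[
\|\ff\|_\lt^2+\|\dx\ff\|_\lt^2+\|\ff_y\|_\lt^2=J(\ff).
\]
This is one equation relating the four quantities $\|\ff\|_\lt^2$, $\|\dx\ff\|_\lt^2$, $\|\ff_y\|_\lt^2$, and $J(\ff)$; to pin down the three ratios I need two more.

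The two additional relations come from scaling (Pohozaev) identities, obtained by testing against $x\ff_x$ and $y\ff_y$. Multiplying \eqref{bozk-2} by $x\ff_x$ and integrating, the key simplification is precisely the already-established fact \eqref{zeroH}, namely $\int x\ff_x\h\ff_x\,dxdy=0$; together with the standard integration-by-parts evaluations $\int x\ff_x\ff\,dxdy=-\tfrac12\|\ff\|_\lt^2$, $\int x\ff_x\ff^{p+1}\,dxdy=-\tfrac{1}{p+2}J(\ff)$, and $\int x\ff_x\ff_{yy}\,dxdy=\tfrac12\|\ff_y\|_\lt^2$, this produces a relation involving $\|\ff\|_\lt^2$, $J(\ff)$, and $\|\ff_y\|_\lt^2$ but \emph{not} $\|\dx\ff\|_\lt^2$, owing to the vanishing of the Hilbert-transform term. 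Similarly, testing against $y\ff_y$ and integrating yields a relation in which the $x$-derivative terms scale neutrally and the $\ff_{yy}$ term contributes, giving a second independent equation. The combination of these two Pohozaev relations with the Nehari identity gives three linear equations in the three ratios, which I would solve algebraically to obtain (i)--(iii).

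The main obstacle will be the careful bookkeeping of the integration-by-parts constants in the scaling identities, especially getting the signs and the factor of $\tfrac12$ correct in each term $\int x\ff_x(\cdot)$ and $\int y\ff_y(\cdot)$, and in particular verifying that the Hilbert-transform term genuinely drops out of the $x$-scaling relation via \eqref{zeroH}; the anisotropy means the $x$- and $y$-scalings behave differently, so one cannot simply invoke a single dilation. Once all three linear relations are in hand, solving the resulting $3\times3$ system is routine, and I expect (i), (ii), (iii) to fall out directly, with (i) identifying $J(\ff)$ in terms of $\|\dx\ff\|_\lt^2$ and (ii), (iii) expressing the remaining norms against the same reference quantity $\|\dx\ff\|_\lt^2$.
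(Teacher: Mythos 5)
Your proposal is correct and is essentially the paper's own proof: the paper likewise multiplies \eqref{bozk-2} by $\ff$, $x\ff_x$, and $y\ff_y$, uses \eqref{zeroH} to kill the Hilbert-transform term in the $x$-scaling identity (while that term survives, with the same weight as the mass term, in the $y$-scaling identity), and solves the resulting three linear relations for the ratios in (i)--(iii). The only ingredient the paper makes explicit that you leave implicit is the justification of the weighted integrations by parts, which requires the smoothness and the (algebraic in $x$, exponential in $y$) decay of ground states quoted from Theorems 4.7 and 5.9 of \cite{aaj}.
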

\begin{proof}
First we recall that ground state solutions are $C^\infty$ and together with all its derivatives are bounded and tend to zero at infinity. In addition, there is a constant $\sigma>0$ such that, for any ground state $\ff$, $|x|^se^{\sigma|y|}\ff(x,y)\in L^1(\rr^2)\cap L^\infty(\rr^2)$, $s\in[0,3/2)$ (see Theorems 4.7 and 5.9 in \cite{aaj}). This is enough to justify the calculations to follow.
We multiply equation \eqref{bozk-2} by $\ff$, $x\ff_x$, and $y\ff_y$,
respectively, integrate over $\rr^2$, use \eqref{zeroH} and elementary properties of the
Hilbert transform together with integration by parts to get
\begin{gather}
\int_{\rr^2}\left(\ff^2+\ff\mathscr{H}\ff_x+\ff_y^2
-\ff^{p+2}\right)\;dxdy=0\label{2bozk-nonex-1},\\
\int_{\rr^2}\left(\ff^2+\ff_y^2-\frac{2}{p+2}
\ff^{p+2}\right)\;dxdy=0\label{2bozk-nonex-2},\\
\int_{\rr^2}\left(\ff^2+\ff\mathscr{H}\ff_x-\ff_y^2-
\frac{2}{p+2}\ff^{p+2}\right)\;dxdy=0.\label{2bozk-nonex-3}
\end{gather}
Subtracting \eqref{2bozk-nonex-2} from \eqref{2bozk-nonex-1} we obtain
\begin{equation}
\int_{\rr^2}\left(\ff\mathscr{H}\ff_x
-\frac{p}{p+2}\ff^{p+2}\right)\;dxdy=0\label{2bozk-nonex-5}.
\end{equation}
This proves (i) because
$\int_{\rr^2}\ff\mathscr{H}\ff_xdxdy=\|D^{1/2}_x\ff\|^2_\lt$. To prove (ii),
we add \eqref{2bozk-nonex-2} and \eqref{2bozk-nonex-3} to have
\begin{equation}
\int_{\rr^2}\left(\ff^2+\frac{1}{2}\ff\mathscr{H}\ff_x
-\frac{2}{p+2}\ff^{p+2}\right)\;dxdy=0\label{2bozk-nonex-6}.
\end{equation}
From \eqref{2bozk-nonex-6} and using part (i) we deduce
$$
\|\ff\|^2_\lt=\left(\frac{2}{p}-\frac{1}{2}\right)\|D^{1/2}_x\ff\|^2_\lt=\frac{4-p}{2p}\|D^{1/2}_x\ff\|^2_\lt.
$$
Finally, using \eqref{2bozk-nonex-1} and parts (i) and (ii) we get (iii). The
proof of the lemma is thus completed.
\end{proof}

\begin{lemma}\label{equival}
Let
\[
K(u)=\dfrac{1}{2}\left(\|u\|_\lt^2+\|u_y\|_\lt^2\right)-\frac{1}{p+2}J(u).
\]
Assume that $\ff$ is a ground state solution of \eqref{bozk-2}. Then,
$K(\ff)=0$ and $\ff$ minimizes the functional $I$ among all solutions of
\eqref{bozk-2}.
\end{lemma}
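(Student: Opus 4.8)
The plan is to derive both claims from the Pohozaev-type identities already recorded in the proof of Lemma \ref{lemsapre}. First I note that since $\int_{\rr^2}\ff\h\ff_x\,dxdy=\|\dx\ff\|_\lt^2$, the action on a solution involves only the quantities appearing in those identities; in particular $I(\ff)=\tfrac12\|\ff\|_\ihf^2$.

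For the equality $K(\ff)=0$ I would simply observe that
\[
2K(\ff)=\|\ff\|_\lt^2+\|\ff_y\|_\lt^2-\frac{2}{p+2}J(\ff)=\int_{\rr^2}\left(\ff^2+\ff_y^2-\frac{2}{p+2}\ff^{p+2}\right)dxdy,
\]
which is precisely the left-hand side of \eqref{2bozk-nonex-2}. Hence $K(\ff)=0$ is immediate from that identity, whose derivation (multiplying \eqref{bozk-2} by $x\ff_x$, integrating over $\rr^2$, and using \eqref{zeroH} together with the regularity and decay of ground states) was already carried out in Lemma \ref{lemsapre}.

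For the minimization of $I$ among all solutions, the key step is the Nehari-type identity $2I(\psi)=J(\psi)$, valid for every sufficiently regular, decaying solution $\psi$ of \eqref{bozk-2}. This follows by multiplying \eqref{bozk-2} by $\psi$ and integrating by parts, i.e.\ it is identity \eqref{2bozk-nonex-1} written for an arbitrary solution rather than specifically for $\ff$. Substituting $J(\psi)=2I(\psi)$ into the action gives, for every such $\psi$,
\[
S(\psi)=I(\psi)-\frac{1}{p+2}J(\psi)=\left(1-\frac{2}{p+2}\right)I(\psi)=\frac{p}{p+2}\,I(\psi).
\]
Since $0<p<4$, the factor $p/(p+2)$ is a positive constant, so $S$ and $I$ are positive multiples of one another on the set of all solutions. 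By definition a ground state $\ff$ minimizes $S$ over this set, and therefore it minimizes $I$ as well, which is the desired conclusion.

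I expect the only point needing genuine care to be the justification that the Nehari identity $2I(\psi)=J(\psi)$ holds for an \emph{arbitrary} solution and not merely for $\ff$; this requires the same integration-by-parts legitimacy (i.e.\ the decay and regularity used in Lemma \ref{lemsapre}). I would either invoke the regularity theory of \cite{aaj} for all solutions in the admissible class, or restrict the comparison to the class of solutions for which these identities are valid, which in any case contains all ground states—the only solutions relevant to the minimization.
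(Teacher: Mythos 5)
Your proof is correct, and for the minimization claim it takes a genuinely lighter route than the paper. For $K(\ff)=0$ both arguments have the same content: you simply observe that $2K(\ff)$ is literally the left-hand side of \eqref{2bozk-nonex-2}, while the paper recombines parts (i)--(iii) of Lemma \ref{lemsapre} algebraically to reach the same conclusion; these are equivalent, yours being slightly more direct. The real difference is in the second claim. The paper extends \emph{all three} identities of Lemma \ref{lemsapre} to an arbitrary solution $u$ (so in particular the Pohozaev-type identities obtained by multiplying by $xu_x$ and $yu_y$, whose justification needs the regularity and weighted decay of solutions), deduces $K(u)=0$, and then expresses both functionals in terms of a single quantity: $S(u)=K(u)+\tfrac12\|\dx u\|_\lt^2=\tfrac12\|\dx u\|_\lt^2$ and $I(u)=\tfrac12\bigl(1+\tfrac{2}{p}\bigr)\|\dx u\|_\lt^2$, so that minimizing $S$ over solutions is the same as minimizing $I$. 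You instead use only the Nehari identity $2I(\psi)=J(\psi)$ for the competitor solutions, which gives $S=\tfrac{p}{p+2}I$ on the solution set at once; this identity is just $\langle S'(\psi),\psi\rangle=0$ and so holds for any weak $\ihf$ solution with no decay hypotheses, meaning you need the delicate weighted-decay justification only for the ground state $\ff$ itself (where the paper already invokes \cite{aaj}). Your concluding caveat is thus resolved more easily than you suggest. What the paper's longer route buys is the explicit formulas $S(u)=\tfrac12\|\dx u\|_\lt^2$ and $I(u)=\tfrac12\bigl(1+\tfrac{2}{p}\bigr)\|\dx u\|_\lt^2$ on the solution set, which are reused later (in Lemma \ref{equivala2} and in the proof of Theorem \ref{teo-best}); your argument is more economical but does not produce these byproducts.
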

\begin{proof}
Let $u\in\ihf$ be a solution of \eqref{bozk-2}. Note that the properties
determined in Lemma \ref{lemsapre} does not depend on the fact that $\ff$ is
a ground state but only on the fact the $\ff$ is a solution of
\eqref{bozk-2}. Thus, the same properties hold for $u$ and
$$
K(u)=\frac{1}{2}\left(\frac{4-p}{2p}+\frac{1}{2}\right)\|D_x^{1/2}u\|^2_\lt-\frac{1}{p}\|D_x^{1/2}u\|^2_\lt=0.
$$
In particular we have $K(\ff)=0$.

By definition it is inferred that
$S(u)=K(u)+\frac{1}{2}\|D_x^{1/2}u\|^2_\lt$. By Taking into account that
$\ff$ is a ground state, we have
$$
\frac{1}{2}\|D_x^{1/2}\ff\|^2_\lt=S(\ff)\leq
S(u)=\frac{1}{2}\|D_x^{1/2}u\|^2_\lt.
$$
This shows that $\ff$ minimizes $\|D_x^{1/2}u\|^2_\lt$ among all solutions of
\eqref{bozk-2}. But since,
$$
I(u)=\frac{1}{2}\left(1+\frac{2}{p}\right)\|D_x^{1/2}u\|^2_\lt,
$$
we then deduce
$$
I(\ff)=\frac{1}{2}\left(1+\frac{2}{p}\right)\|D_x^{1/2}\ff\|^2_\lt\leq
\frac{1}{2}\left(1+\frac{2}{p}\right)\|D_x^{1/2}u\|^2_\lt=I(u).
$$
This completes the proof.
\end{proof}

\begin{lemma} \label{equivala1}
Let $\ff$ be a ground state solution of \eqref{bozk-2}. Assume that
$u\in\ihf$ satisfies $J(u)=J(\ff)$. Then, $I(\ff)\leq I(u)$.
\end{lemma}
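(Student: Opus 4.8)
The plan is to recognize that the asserted inequality says exactly that $\ff$ is a minimizer of the constrained problem \eqref{minimiz-constr-lp} at the level $\lambda=J(\ff)$, i.e.\ that $I_\lambda=I(\ff)$ for this particular $\lambda$. The first ingredient is a clean evaluation of $I(\ff)$. Adding the three Pohozaev-type identities behind Lemma \ref{lemsapre} (or, more directly, reading off \eqref{2bozk-nonex-1}) gives $\|\ff\|_\ihf^2=J(\ff)$, so that
\[
I(\ff)=\tfrac12\|\ff\|_\ihf^2=\tfrac12 J(\ff).
\]
Since $\ff$ is admissible for \eqref{minimiz-constr-lp} with $\lambda=J(\ff)$, this already yields the easy bound $I_\lambda\le I(\ff)=\tfrac12\lambda$.

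For the reverse inequality I would invoke the existence theory from \cite{aaj}: the minimization problem \eqref{minimiz-constr-lp} admits a minimizer $\psi$ with $J(\psi)=\lambda$ and $I(\psi)=I_\lambda$, which is smooth and decaying enough to be treated as a classical solution of its Euler--Lagrange equation. Computing $I'(\psi)=\psi+\h\psi_x-\psi_{yy}$ and pairing the Lagrange identity $I'(\psi)=\theta J'(\psi)$ with $\psi$ gives $2I_\lambda=\theta(p+2)\lambda$, whence $\psi$ solves $\psi+\h\psi_x-\psi_{yy}=(2I_\lambda/\lambda)\,\psi^{p+1}$. A multiplicative rescaling $\widetilde\psi=\kappa\psi$ with $\kappa=(2I_\lambda/\lambda)^{1/p}$ then turns this into \eqref{bozk-2}, so $\widetilde\psi$ is a genuine (nontrivial) solution. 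Crucially, the easy bound $I_\lambda\le\tfrac12\lambda$ forces $\kappa\le1$.

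Now Lemma \ref{equival} applies to $\widetilde\psi$: since $\ff$ minimizes $I$ among all solutions of \eqref{bozk-2}, we get $I(\ff)\le I(\widetilde\psi)=\kappa^2 I_\lambda\le I_\lambda$, the last step using $\kappa\le1$. Combined with $I_\lambda\le I(\ff)$ this gives $I_\lambda=I(\ff)$, and therefore $I(\ff)\le I(u)$ for every $u$ with $J(u)=\lambda=J(\ff)$, which is the claim. The main obstacle is the middle step: one must know that the constrained minimizer exists and is regular enough to qualify as a solution to which Lemma \ref{equival} applies---this is exactly where the concentration-compactness analysis of \cite{aaj} enters---and one must keep careful track of the rescaling constant $\kappa$, since the whole argument hinges on the inequality $\kappa\le1$ supplied by the identity $I(\ff)=\tfrac12 J(\ff)$. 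An alternative, more hands-on route is to first use the three-parameter dilation $u_{a,b,c}(x,y)=a\,u(bx,cy)$ to replace $u$ by the $I$-minimizing rescaling with $J$ fixed (which one checks satisfies the ground-state ratios $\|u_y\|_\lt^2=\tfrac12\|\dx u\|_\lt^2$ and $\|u\|_\lt^2=\tfrac{4-p}{2p}\|\dx u\|_\lt^2$ of Lemma \ref{lemsapre}), thereby reducing the claim to $\|\dx u\|_\lt^2\ge\|\dx\ff\|_\lt^2$, i.e.\ to $K(u)\ge0$; but closing this last inequality still requires the same variational input.
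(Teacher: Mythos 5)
Your proof is correct and follows essentially the same route as the paper: take the constrained minimizer $v$ of \eqref{minimiz-constr-lp} at level $\lambda=J(\ff)$, use the Lagrange multiplier relation together with the identity $2I(\ff)=J(\ff)$ (from \eqref{2bozk-nonex-1}) to show the multiplier is at most $1$, rescale $v$ into a genuine solution of \eqref{bozk-2}, and invoke Lemma \ref{equival} to close the chain of inequalities. The only differences are presentational (you make $I(\ff)=\tfrac12\lambda$ explicit and conclude via $\kappa\le 1$ directly, where the paper deduces $\theta=1$), so no further comment is needed.
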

\begin{proof}
Let $\lambda=J(\ff)$. Let $v$ be a minimum of the minimization problem
\eqref{minimiz-constr-lp}. Since $I(v)\leq I(u)$ for all $u\in\ihf$
satisfying $J(u)=\lambda$, it suffices to show that
\begin{equation}\label{a11}
I(\ff)\leq I(v).
\end{equation}
Because $v$ minimizes $I_\lambda$, we obtain
\begin{equation}\label{a12}
I_\lambda=I(v)\leq I(\ff).
\end{equation}
Moreover, there exists a positive Lagrange multiplier $\theta$ such that
\begin{equation}\label{a13}
v+\mathscr{H}v_x-v_{yy}=\theta v^{p+1}.
\end{equation}
Multiplying \eqref{a13} by $v$,  integrating over $\rr^2$ and using
\eqref{a12} yield
$$
\theta \lambda=\theta J(v)=2I(v)\leq 2I(\ff)=J(\ff)=\lambda.
$$
This shows that $0<\theta\leq1$. Now define $w=\theta^{1/p}v$. It is easy to see
that $w$ is a solution of \eqref{bozk-2}. Therefore, from Lemma \ref{equival}
and \eqref{a12},
$$
I(\ff)\leq I(w)=\theta^{2/p}I(v)\leq \theta^{2/p}I(\ff).
$$
With this last inequality we then conclude that $\theta=1$ and the proof is
completed.
\end{proof}

\begin{lemma}\label{equivala2}
Let $\ff$ be a ground state solution of \eqref{bozk-2}. Then
$$
\inf\{\|D_x^{1/2}u\|^2_\lt;\;\; u\in\ihf,
u\neq0,K(u)=0\}=\|D_x^{1/2}\ff\|^2_\lt,
$$
where $K$ is defined in Lemma \ref{equival}.
\end{lemma}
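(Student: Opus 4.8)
The plan is to establish the two inequalities separately. Since $K(\ff)=0$ by Lemma \ref{equival} and $\ff\neq0$, the ground state $\ff$ is admissible in the infimum, so the infimum is at most $\|\dx\ff\|_\lt^2$. The entire content is therefore the reverse bound: every $u\in\ihf$ with $u\neq0$ and $K(u)=0$ must satisfy $\|\dx u\|_\lt^2\geq\|\dx\ff\|_\lt^2$.

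To prove this I would exploit the three-parameter dilation $u^{a,b,c}(x,y)=a\,u(bx,cy)$, under which $\|u^{a,b,c}\|_\lt^2=\tfrac{a^2}{bc}\|u\|_\lt^2$, $\|\dx u^{a,b,c}\|_\lt^2=\tfrac{a^2}{c}\|\dx u\|_\lt^2$, $\|u^{a,b,c}_y\|_\lt^2=\tfrac{a^2c}{b}\|u_y\|_\lt^2$, and $J(u^{a,b,c})=\tfrac{a^{p+2}}{bc}J(u)$. Fixing $u$ with $K(u)=0$ (note this forces $J(u)=\tfrac{p+2}{2}(\|u\|_\lt^2+\|u_y\|_\lt^2)>0$), the equation $K(u^{a,b,c})=0$ is seen to factor so that $b$ drops out and $a$ is determined as a function of $c$ alone; consequently $\|\dx u^{a,b,c}\|_\lt^2$ depends only on $c$. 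A one-variable minimization over $c>0$ then selects a distinguished rescaling $v$ — the one for which the ratio $\|v_y\|_\lt^2/\|v\|_\lt^2$ equals the ground-state value $p/(4-p)$ — and, since $u$ itself corresponds to the parameter value $c=1$, this yields $\|\dx v\|_\lt^2\leq\|\dx u\|_\lt^2$. Spending the remaining free parameter $b$, which alters neither $K$ nor $\|\dx\cdot\|_\lt$, I would normalise $v$ so that it satisfies the full set of Pohozaev relations of Lemma \ref{lemsapre}, namely $\|v\|_\lt^2=\tfrac{4-p}{2p}\|\dx v\|_\lt^2$ and $\|v_y\|_\lt^2=\tfrac12\|\dx v\|_\lt^2$ together with $K(v)=0$; these force $J(v)=\tfrac{p+2}{p}\|\dx v\|_\lt^2$ and $I(v)=\tfrac12(1+\tfrac2p)\|\dx v\|_\lt^2$.

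Finally I would compare $v$ with $\ff$ through Lemma \ref{equivala1}. Scaling $v$ by a constant $\kappa>0$ so that $w=\kappa v$ satisfies $J(w)=J(\ff)$, that lemma gives $I(\ff)\leq I(w)=\kappa^2 I(v)$. Since $\ff$ obeys the same Pohozaev relations (so $I(\ff)=\tfrac12(1+\tfrac2p)\|\dx\ff\|_\lt^2$ and $J(\ff)=\tfrac{p+2}{p}\|\dx\ff\|_\lt^2$), the value of $\kappa$ is pinned down by $\kappa^{p+2}=\|\dx\ff\|_\lt^2/\|\dx v\|_\lt^2$; substituting this into $I(\ff)\leq\kappa^2 I(v)$ and cancelling the common homogeneity factor reduces it to $\|\dx\ff\|_\lt^2\leq\|\dx v\|_\lt^2$. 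Chaining the two estimates gives $\|\dx u\|_\lt^2\geq\|\dx v\|_\lt^2\geq\|\dx\ff\|_\lt^2$, which is the desired reverse bound.

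I expect the delicate point to be the scaling step: one must check that the minimisation over $c$ is genuinely attained in the interior (the relevant quantity blows up both as $c\to0^+$ and as $c\to\infty$ precisely because $0<p<4$) and that the free parameter $b$ can indeed be used to realise the Pohozaev relations without disturbing either $K(v)=0$ or the value of $\|\dx v\|_\lt^2$. The homogeneity bookkeeping in the comparison step, though elementary, must also be carried out carefully so that the multiplicative constant $\kappa$ cancels in the correct direction.
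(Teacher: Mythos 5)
Your proof is correct, but it takes a genuinely different and considerably longer route than the paper's. The paper disposes of the reverse inequality with a single one-parameter dilation: given $u\neq0$ with $K(u)=0$, it sets $u_\mu(x,y)=u(x/\mu,y)$ with $\mu=J(\ff)/J(u)$, observing that this scaling multiplies $\|u\|_\lt^2$, $\|u_y\|_\lt^2$ and $J(u)$ all by the same factor $\mu$ (hence preserves $K=0$ and achieves $J(u_\mu)=J(\ff)$) while leaving $\|\dx u\|_\lt^2$ invariant; then Lemma \ref{equivala1} gives $I(\ff)\leq I(u_\mu)$, and the identity $I=K+\tfrac{1}{p+2}J+\tfrac12\|\dx \cdot\|_\lt^2$ cancels the matched terms, yielding $\|\dx\ff\|_\lt^2\leq\|\dx u_\mu\|_\lt^2=\|\dx u\|_\lt^2$ in two lines. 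You use the same two essential ingredients --- Lemma \ref{equivala1} and the invariance of $\|\dx\cdot\|_\lt^2$ under $x$-dilations --- but orchestrate them differently: you minimize $\|\dx u^{a,b,c}\|_\lt^2$ over $c$ subject to $K=0$, use $b$ to land on the Pohozaev manifold of Lemma \ref{lemsapre}, and only then match $J$ by an amplitude factor $\kappa$, which (unlike the paper's dilation) does not preserve $\|\dx\cdot\|_\lt^2$ and therefore forces the final homogeneity cancellation $\|\dx\ff\|_\lt^2\leq\bigl(\|\dx\ff\|_\lt^2/\|\dx v\|_\lt^2\bigr)^{2/(p+2)}\|\dx v\|_\lt^2$, hence $\|\dx\ff\|_\lt^2\leq\|\dx v\|_\lt^2$. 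I checked your scaling identities, the factorization of $K(u^{a,b,c})=0$ (with $a^p=(p+2)(\|u\|_\lt^2+c^2\|u_y\|_\lt^2)/(2J(u))$), the interior attainment of the $c$-minimum at $c_*^2=p\|u\|_\lt^2/((4-p)\|u_y\|_\lt^2)$ for $0<p<4$, the $b$-normalization, and the final cancellation: all are sound, so the proof is valid. What your version buys is the extra observation that the optimal anisotropic rescaling of an arbitrary admissible $u$ reproduces exactly the ground-state ratio $\|v_y\|_\lt^2/\|v\|_\lt^2=p/(4-p)$; what it costs is the bookkeeping the paper avoids entirely. Two small points you should make explicit: the interior minimization over $c$ requires $\|u_y\|_\lt\neq0$, which holds because a nonzero function of $L^2(\rr^2)$ cannot be independent of $y$; and the $b$-scaling multiplies $K$ by $1/b$ rather than literally fixing it, which is harmless only because you apply it on the set where $K=0$.
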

\begin{proof}
Let $u\in\ihf$ be such that $u\neq0$ and $K(u)=0$. From the definition of $K$
we have $J(u)>0$. Define
$$
u_\mu(x,y)=u\left(\frac{x}{\mu},y\right), \qquad \mu=\frac{J(\ff)}{J(u)}.
$$
A straightforward calculation reveals that $J(u_\mu)=J(\ff)$ and
$K(u_\mu)=0$. Since $u_\mu\in\ihf$, Lemma \ref{equivala1} implies that
$I(\ff)\leq I(u_\mu)$. Observe that
$$
I(v)=K(v)+\frac{1}{p+2}J(v)+\frac{1}{2}\|D_x^{1/2}v\|^2_\lt, \qquad {\rm for\; all}\;\;
v\in \ihf.
$$
Therefore,
$$
K(\ff)+\frac{1}{p+2}J(\ff)+\frac{1}{2}\|D_x^{1/2}\ff\|^2_\lt \leq
K(u_\mu)+\frac{1}{p+2}J(u_\mu)+\frac{1}{2}\|D_x^{1/2}u_\mu\|^2_\lt.
$$
The facts that $K(\ff)=K(u_\mu)=0$ and $J(\ff)=J(u_\mu)$ then imply the
desired because $\|D_x^{1/2}u_\mu\|^2_\lt=\|D_x^{1/2}u\|^2_\lt$. The proof is
thus completed.
\end{proof}

\section{Proof of Theorem  \ref{teo-best}}\label{sec3}

In this section we will prove Theorem \ref{teo-best}. First we show that
\[
\vr^{-1}\geq
\frac{4-p}{2(p+2)}\left(\frac{p}{4-p}\right)^{p/4}\|\dx\ff\|_\lt^p.
\]
Let  $u\in\ihf$ be such that $u\neq0$ and $J(u)>0$. Choose positive real
constants $\kappa,\xi$, and $\mu$ such that
\[
\omega(x,y)=\kappa u(\xi x,\mu y)
\]
satisfies
\begin{eqnarray}
\|\omega_y\|_\lt=\kappa\mu^{1/2}\xi^{-1/2} \|u_y\|_\lt=\frac{1}{\sqrt2}\|\dx \ff\|_\lt,\label{comp-1}\\
J(\omega)=\kappa^{p+2}\mu^{-1}\xi^{-1}J(u)=\frac{p+2}{p}\|\dx\ff\|_\lt^2\label{comp-2}\\
\|\omega\|_\lt =\kappa
\mu^{-1/2}\xi^{-1/2}\|u\|_\lt=\left(\frac{4-p}{2p}\right)^{1/2}\|\dx\ff\|_\lt.\label{comp-3}
\end{eqnarray}

A straightforward algebraic computation reveals that such a choice is always
possible. In particular, gathering together identities \eqref{comp-1},
\eqref{comp-2}, and \eqref{comp-3} give
\begin{equation}\label{ai1}
\kappa^p=\frac{2(p+2)}{4-p}\|u\|_\lt^2\left(J(u)\right)^{-1}
\end{equation}
and
\begin{equation}\label{ai2}
\mu^{-2}=\frac{4-p}{p}\|u_y\|_\lt^2\|u\|_\lt^{-2}.
\end{equation}
Hence, using Plancherel's identity, \eqref{ai1} and \eqref{ai2} we get
\begin{equation}\label{comp-4}\begin{split}
\|\dx
\omega\|_\lt^2&=\left(\frac{4-p}{p}\right)^{1/2}\left(\frac{2(p+2)}{4-p}\right)^{2/p}
\|\dx u\|_\lt^2 \|u_y\|_\lt \|u\|_\lt^{\frac{4-p}{p}} (J(u))^{-2/p}.
\end{split}
\end{equation}
By using \eqref{comp-1}-\eqref{comp-3}  it is readily seen that
$K(\omega)=0$. Therefore, Lemma \ref{equivala2} implies
\begin{equation}\label{ai3}
\|\dx\omega\|_\lt\geq\|\dx \ff\|_\lt.
\end{equation}
On the other hand, observe that
\[
\varrho^{-1}=\inf\left\{\A(u);\;u\in\ihf,\;u\neq0,\;J(u)>0\right\},
\]
where
\[
\A(u)=\|\dx u\|_\lt^p\|u_y\|_\lt^{p/2}\|u\|_\lt^{(4-p)/2}J(u)^{-1}.
\]
Consequently, it follows from \eqref{comp-4} and \eqref{ai3} that
\[
\A(u)\geq\frac{4-p}{2(p+2)}\left(\frac{p}{4-p}\right)^{p/4}\|\dx\ff\|_\lt^{p}.
\]
Since $u$ is arbitrary, it is   concluded that

\begin{equation}\label{ai4}
\varrho^{-1}
\geq\frac{4-p}{2(p+2)}\left(\frac{p}{4-p}\right)^{p/4}\|\dx\ff\|_\lt^{p}.
\end{equation}

Next we prove the
\[
\varrho^{-1}
\leq\frac{4-p}{2(p+2)}\left(\frac{p}{4-p}\right)^{p/4}\|\dx\ff\|_\lt^{p}.
\]
Indeed, since $\ff\neq0$ and $J(\ff)>0$ we have
\begin{equation}\label{ai5}
\varrho^{-1}\leq \A(\ff).
\end{equation}
An application of Lemma \ref{lemsapre} infers  that
\begin{equation}\label{ai6}
\A(\ff)=\frac{4-p}{2(p+2)}\left(\frac{p}{4-p}\right)^{p/4}\|\dx\ff\|_\lt^{p}.
\end{equation}
Gathering together \eqref{ai5} and \eqref{ai6} and combining the result with
\eqref{ai4} we get
\[\begin{split}
\varrho^{-1}=\frac{4-p}{2(p+2)}\left(\frac{p}{4-p}\right)^{p/4}\|\dx\ff\|_\lt^{p}
\end{split}\]
Using Lemma \ref{lemsapre} we then deduce
\[\begin{split}
\varrho^{-1}=\frac{4-p}{2(p+2)}\left(\frac{p}{4-p}\right)^{3p/4}2^{p/2}\|\ff\|_\lt^p.
\end{split}\]
Finally, it is obvious that $d\leq S(\ff)$. On the other hand, if $u\in\ihf$
satisfies $S'(u)=0$ then $u$ is a solution of \eqref{bozk-2}, which implies
that $S(\ff)\leq S(u)$ and, hence, $S(\ff)\leq d$. Since Lemma \ref{lemsapre}
gives $S(\ff)=\frac{1}{2}\|\dx\ff\|_\lt^2$, the second equality in Theorem
\ref{teo-best} is thus proved.\fim

In view of \eqref{gn1dbest} we can prove the lower bound for the $L^2$-norm
of the solitary waves.
\begin{corollary}
If $\varphi$ is a nontrivial solution of \eqref{bozk-2}, then
\begin{equation}\label{lower}
\|\varphi\|_{L^2(\rr^2)}\geq\|\psi_2\|_{L^2(\rr)}\|\psi_1\|_{L^2(\rr)},
\end{equation}
where $\psi_2$ is a solution of
\begin{equation}
-\psi_2''+\psi_2-\psi_2^{p+1}=0
\end{equation}
and $\psi_1$ is a solution of
\begin{equation}
\mathscr{H}\psi_1+\psi_1-\psi^{\frac{3p+4}{4-p}}_1=0.
\end{equation}
\end{corollary}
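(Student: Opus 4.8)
The plan is to redo the chain of estimates in the proof of Proposition \ref{bozk-ineq}, this time keeping the \emph{optimal} one-dimensional constants \eqref{gn1dbest} at each step, and then to compare the explicit constant so produced with the value of $\A(\ff)$ dictated by Lemma \ref{lemsapre}.

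First I would rerun the two applications of \eqref{gn1d} that appear in \eqref{ineq-0}--\eqref{ineq-01}, now with sharp constants. The first application is in the $y$-variable with $\beta=2$, $r=p+2$, whose extremal $\Psi$ solves $D^2\Psi+\Psi-\Psi^{p+1}=0$, i.e. $-\psi_2''+\psi_2-\psi_2^{p+1}=0$; it contributes the factor $C_{p+2,2}$. The second is in the $x$-variable with $\beta=1$ and $r=\frac{2(p+4)}{4-p}$; since $r-1=\frac{3p+4}{4-p}$, its extremal is exactly the $\psi_1$ of the statement, and it contributes $C_{r,1}$. Carrying the intermediate H\"older and Minkowski steps exactly as in \eqref{ineq-0} and tracking the exponents, I expect the chained inequality to be a valid instance of \eqref{inhomo-embed} with explicit constant
\[
\widetilde\varrho=C_{p+2,2}\,C_{r,1}^{(4-p)/4},\qquad r=\tfrac{2(p+4)}{4-p}.
\]
In particular $\widetilde\varrho$ is admissible in \eqref{inhomo-embed}, so for any $u\neq0$ with $J(u)>0$ one has $\A(u)\ge\widetilde\varrho^{-1}$ (and, consistently, $\varrho\le\widetilde\varrho$).

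Next I would specialize to a nontrivial solution $\ff$ of \eqref{bozk-2}. As noted just before Lemma \ref{equival}, the identities of Lemma \ref{lemsapre} use only that $\ff$ solves \eqref{bozk-2}, so the computation leading to \eqref{ai6} applies verbatim and gives
\[
\A(\ff)=\frac{4-p}{2(p+2)}\left(\frac{p}{4-p}\right)^{3p/4}2^{p/2}\|\ff\|_\lt^p.
\]
Combining this with $\A(\ff)\ge\widetilde\varrho^{-1}$ reduces the corollary to the numerical identity
\[
\widetilde\varrho^{-1}=\frac{4-p}{2(p+2)}\left(\frac{p}{4-p}\right)^{3p/4}2^{p/2}\big(\|\psi_2\|_\lt\|\psi_1\|_\lt\big)^p,
\]
since then the common prefactor cancels and \eqref{lower} follows after taking $p$-th roots.

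The only genuine computation -- and the step I expect to be the main obstacle -- is this last identity. Inserting the closed forms of $C_{p+2,2}$ and $C_{r,1}$ from \eqref{gn1dbest} (with the parameters above, for which $2+r(\beta-1)$ equals $p+4$ and $2$ respectively, while $r-2$ equals $p$ and $\frac{4p}{4-p}$), one must verify that every power of $2$, $p$, $4-p$ and $p+4$ coming from the two constants and from the conversion $\|\dx\ff\|_\lt^2=\frac{2p}{4-p}\|\ff\|_\lt^2$ cancels, leaving the bare product $(\|\psi_2\|_\lt\|\psi_1\|_\lt)^p$ with coefficient $1$. A secondary point is to confirm that the exponent attached to $C_{r,1}$ is indeed $(4-p)/4$, which is precisely where the H\"older/Minkowski bookkeeping must be done carefully, and that Lemma \ref{lemsapre} applies to an arbitrary nontrivial solution, which rests on the regularity and decay recorded at the start of its proof.
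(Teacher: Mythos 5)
Your proposal is correct and is essentially the paper's own argument: the paper likewise obtains \eqref{lower} by rerunning the proof of Proposition \ref{bozk-ineq} with the sharp one-dimensional constants \eqref{gn1dbest} kept at each step, and then comparing the resulting admissible constant with the explicit value \eqref{besng} coming from the identities of Lemma \ref{lemsapre} (your direct use of $\A(\varphi)\ge\widetilde\varrho^{-1}$ for an arbitrary nontrivial solution, rather than passing through the ground state, is only a cosmetic variant). The bookkeeping you flag as the main obstacle does close: the exponent on $C_{r,1}$ is indeed $(4-p)/4$, every power of $p+4$ cancels, and one finds exactly $\widetilde\varrho^{-1}=\frac{4-p}{2(p+2)}\left(\frac{p}{4-p}\right)^{3p/4}2^{p/2}\left(\|\psi_2\|_\lt\|\psi_1\|_\lt\right)^{p}$, which yields \eqref{lower} upon taking $p$-th roots.
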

\begin{proof}
The best constant of \eqref{inhomo-embed} is obtained from Theorem
\ref{teo-best}. Then the lower bound \eqref{lower} is derived by a direct
calculation from the proof of Lemma \ref{bozk-ineq} taking into account the
best constant in \eqref{gn1dbest}.
\end{proof}

\section{Proofs of Theorems  \ref{uniform} and \ref{uniform1}}\label{sec4}

As an application of Theorem \ref{teo-best}, we will study the uniform bound
of the solutions to the generalized BO-ZK equation \eqref{bo-zk} stated in
Theorems \ref{uniform} and \ref{uniform1}. We first recall the following
well-posedness result.

\begin{theorem}\label{local}
 Let $s>2$. For any $u_0\in H^s(\rr^2)$, there exists
$T=T(\|u_0\|_{H^s})>0$ and  a unique solution $u\in C([0,T);H^s(\rr^2))$ of
equation \eqref{bo-zk} with $u(0)=u_0$. In addition, $u(t)$ depends
continuously on $u_0$  in the $H^s$-norm. Moreover for all $t\in[0,T)$, we
have $\|u(t)\|_\lt=\|u_0\|_\lt$ and $E(u(t))=E(u_0)$,   where
\begin{equation}\label{third-condition}
E(u)=\frac{1}{2}\int_{\rr^2}\left( u_y^2+ u\h
u_x\right)\,dxdy-\frac{1}{p+2}\int_{\rr^2}u^{p+2}\,dxdy.
\end{equation}
\end{theorem}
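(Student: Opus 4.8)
The plan is to prove Theorem~\ref{local} by the classical parabolic-regularization/energy method, since the two structural obstructions here dictate the strategy. Writing the equation as $u_t=\h u_{xx}-u_{xyy}-\pa_x(u^{p+1})$, the linear part has Fourier multiplier $\ii(|\xi|\xi+\xi\eta^2)$, so the associated propagator $U(t)$ is a \emph{unitary} group on every $H^s(\rr^2)$ and provides no smoothing. Moreover the nonlinearity $\pa_x(u^{p+1})=(p+1)u^p u_x$ loses one derivative. Hence a direct contraction in $C([0,T];H^s)$ via Duhamel is unavailable, and one must instead exploit that the first-order loss is antisymmetric at the level of the top-order energy. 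The exponent $s>2=1+n/2$ (with $n=2$) is exactly the threshold making $H^s(\rr^2)$ a Banach algebra with $H^s\hookrightarrow W^{1,\infty}$, which is what closes the estimates below.

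First I would regularize: for $\mu>0$ consider
\[
u^\mu_t+\mu\,\dl^2 u^\mu-\h u^\mu_{xx}+u^\mu_{xyy}+\pa_x\!\left((u^\mu)^{p+1}\right)=0,\qquad u^\mu(0)=u_0,
\]
whose linear part now generates a smoothing semigroup of order four, dominating both the third-order dispersion and the derivative in the nonlinearity. For each fixed $\mu>0$ a standard Banach fixed-point argument on the Duhamel formula yields a unique local solution $u^\mu\in C([0,T_\mu];H^s)$. The crux is then a \emph{$\mu$-uniform} a priori bound: applying $J^s:=(1-\dl)^{s/2}$, pairing with $J^s u^\mu$ in $L^2$, and using that the dispersive terms are skew-adjoint (so they drop) while the regularizing term contributes $-\mu\|\dl J^s u^\mu\|_\lt^2\le0$ (so it only helps), the sole dangerous contribution is $\langle J^s\pa_x((u^\mu)^{p+1}),J^s u^\mu\rangle$. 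By the Kato--Ponce commutator estimate together with integration by parts this is controlled by $C\|u^\mu\|_{W^{1,\infty}}\|u^\mu\|_{H^s}^2\le C\|u^\mu\|_{H^s}^3$, giving $\tfrac{d}{dt}\|u^\mu\|_{H^s}^2\le C\|u^\mu\|_{H^s}^3$ with $C$ independent of $\mu$. A continuity argument then produces $T=T(\|u_0\|_{H^s})>0$ and a bound $\sup_{[0,T]}\|u^\mu\|_{H^s}\le M$ uniform in $\mu$.

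With these uniform bounds I would pass to the limit $\mu\to0$: the equation bounds $\pa_t u^\mu$ in a lower-order space, so by the Aubin--Lions--Simon lemma a subsequence converges strongly in $C([0,T];H^{s'})$ for $s'<s$ and weakly-$*$ in $L^\infty([0,T];H^s)$ to a limit $u$; strong low-order convergence and the algebra property let one pass to the limit in $\pa_x(u^{p+1})$, so $u$ solves \eqref{bo-zk}. Uniqueness follows at the $L^2$ level: for two solutions the difference $w=u-v$ solves an equation whose dispersive part is skew-adjoint and whose nonlinear difference, written as $\pa_x(G(u,v)w)$ with $G$ bounded in $W^{1,\infty}$, integrates by parts to $\tfrac12\langle G_x w,w\rangle$, whence $\tfrac{d}{dt}\|w\|_\lt^2\le C\|w\|_\lt^2$ and Gronwall gives $w\equiv0$.

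The technically delicate point — and what I expect to be the main obstacle — is upgrading $u\in C_w([0,T];H^s)\cap L^\infty([0,T];H^s)$ to \emph{strong} continuity $u\in C([0,T];H^s)$ together with continuous dependence on $u_0$; here I would run the Bona--Smith argument, approximating $u_0$ by mollified data, using continuity of the flow for smooth data and the uniform estimates to show the $H^s$-norm varies continuously. Finally, the conservation laws are read off the Hamiltonian structure $u_t=\pa_x E'(u)$ with $E'(u)=\h u_x-u_{yy}-u^{p+1}$: then $\tfrac{d}{dt}E(u)=\langle E'(u),\pa_x E'(u)\rangle=0$ by skew-adjointness of $\pa_x$, while pairing \eqref{bo-zk} with $u$ gives $\tfrac{d}{dt}\|u\|_\lt^2=0$ since the dispersive terms are skew-adjoint and $\langle\pa_x(u^{p+1}),u\rangle=-\tfrac{1}{p+2}\int_{\rr^2}\pa_x(u^{p+2})\,dxdy=0$; for $s>2$ both identities are fully justified.
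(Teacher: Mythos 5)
Your proposal is correct and follows essentially the same route as the paper: the paper does not prove Theorem~\ref{local} itself but explicitly defers to the parabolic regularization method of \cite{cp} and \cite{aaj}, which is exactly the scheme you outline (regularize, $\mu$-uniform commutator/energy estimates using $s>2$, pass to the limit, Bona--Smith for strong continuity and continuous dependence, and conservation laws from the Hamiltonian structure $u_t=\partial_x E'(u)$). Your only slip is cosmetic: for general $p$ the energy inequality should read $\tfrac{d}{dt}\|u^\mu\|_{H^s}^2\leq C\|u^\mu\|_{H^s}^{p+2}$ (with $\|u^\mu\|_{W^{1,\infty}}^p$ in the commutator bound), not $C\|u^\mu\|_{H^s}^{3}$, which changes nothing in the continuity argument.
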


Theorem \ref{local} is proved by using the parabolic regularization method
(see \cite{cp} and \cite{aaj}). On the other hand, it was showed in
\cite{pams} that one cannot apply the contraction principle to prove the
local well-posedness of the Cauchy problem associated with \eqref{bo-zk}.
Thus, improvements of Theorem \ref{local} should consider the dispersive
caracter of the equation combined with a compactness-type argument. Note,
however, that Theorems \ref{uniform} and \ref{uniform1}
could be true at any regularity level above the energy space $\ihf$.\\

\noindent\textbf{Proof of Theorem  \ref{uniform}.}\quad Let $u\in
C([0,T);H^s(\rr^2))$ be the solution of \eqref{bo-zk} with the initial data
$u_0\in H^s(\rr^2)$, $s>2$. Then by using the invariants $E$ and
$\|\cdot\|_\lt$, we have
\begin{equation}\label{energy-estimate}\begin{split}
2E(u_0)&=\int_{\rr^2}\left(u_y^2+u\h u_x\right)\,dxdy-\frac{2}{p+2}\int_{\rr^2}u^{p+2}\,dxdy\\
&
\geq\|u\|_\hf^2-\frac{2}{p+2}\|u\|_{L^{p+2}(\rr^2)}^{p+2}\\
&
\geq\|u\|_\hf^2-\frac{2\varrho}{p+2}\|u\|_\lt^{(4-p)/2}\|\dx u\|_\lt^{p}\|u_y\|_\lt^{p/2}\\
& \geq\|u\|_\hf^2-\frac{2\varrho}{p+2}\|u_0\|_\lt^{(4-p)/2}\|u\|_\hf^{3p/2}.
\end{split}\end{equation}
If $0<p<4/3$,  then \eqref{energy-estimate} immediately implies that
$\|u\|_\hf$ (hence $\|u\|_\ihf$) is uniformly bounded for all $t\in [0,T)$.
If $p=4/3$, then we have uniform bound provided that
\begin{equation}\label{12a}
1-\frac{2\vr}{p+2}\|u_0\|_{L^2}^{4/3}>0.
\end{equation}
Using \eqref{besng} we see that \eqref{12a} is equivalent to \eqref{uni134}.
This completes the proof of the theorem. \fim

To prove Theorem \ref{uniform1} we will use the following lemma.

\begin{lemma}\label{begout}
Let $I:=[0,T)\subset\rr$ be a non-degenerated interval. Let $q > 1$, $a>0$,
$b>0$, be real constants. Define  $\vartheta=(bq)^{-1/(q-1)}$ and
$f(r)=a-r+br^q$ for $r\geq0$. Let $G(t)$ be a continuous nonnegative function
on $I$.  If $G(0)<\vartheta$, $a< (1-1/q)\vartheta$ and $f\circ G\geq0$, then
$G(t)<\vartheta$, for any $t\in I$.
\end{lemma}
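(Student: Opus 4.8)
The plan is to run a standard continuity (bootstrap) argument, driven entirely by a careful analysis of the sign of $f$ on $[0,\infty)$. First I would study $f(r)=a-r+br^q$ as a function of $r\geq0$. Differentiating gives $f'(r)=-1+bqr^{q-1}$, whose unique positive zero is precisely $r=(bq)^{-1/(q-1)}=\vartheta$. Since $q>1$ forces $f'<0$ on $[0,\vartheta)$ and $f'>0$ on $(\vartheta,\infty)$, the point $\vartheta$ is the global minimum of $f$ on $[0,\infty)$.

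The key computation is the value of $f$ at this minimum. From $\vartheta^{q-1}=(bq)^{-1}$ one gets $b\vartheta^q=\vartheta/q$, hence
\[
f(\vartheta)=a-\vartheta+b\vartheta^q=a-\Big(1-\tfrac1q\Big)\vartheta .
\]
Thus the hypothesis $a<(1-1/q)\vartheta$ is \emph{exactly} the statement $f(\vartheta)<0$. Combined with $f(0)=a>0$ and $f(r)\to+\infty$ as $r\to\infty$ (again using $q>1$), the monotonicity of $f$ yields two distinct roots $0<r_0<\vartheta<r_1$ with $f\geq0$ on $[0,r_0]\cup[r_1,\infty)$ and $f<0$ on the open interval $(r_0,r_1)$, which contains $\vartheta$.

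With this sign picture in hand, the hypothesis $f\circ G\geq0$ forces $G(t)\in[0,r_0]\cup[r_1,\infty)$ for every $t\in I$. At $t=0$ we have $G(0)<\vartheta<r_1$, so necessarily $G(0)\in[0,r_0]$, i.e.\ $G(0)\leq r_0$. Now I would invoke continuity: $G(I)$ is connected because $I$ is an interval and $G$ is continuous, while $[0,r_0]$ and $[r_1,\infty)$ are separated closed sets (as $r_0<r_1$). A connected set meeting $[0,r_0]$ and disjoint from $(r_0,r_1)$ cannot reach $[r_1,\infty)$; equivalently, were $G(t_\ast)\geq r_1$ for some $t_\ast$, the intermediate value theorem would produce a time at which $G$ takes a value in $(r_0,r_1)$, contradicting $f\circ G\geq0$. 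Hence $G(I)\subseteq[0,r_0]$, so $G(t)\leq r_0<\vartheta$ for all $t\in I$, which is the desired conclusion.

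The argument has no genuinely hard step; the only place demanding care is the identity $f(\vartheta)=a-(1-1/q)\vartheta$, since it is precisely what converts the hypothesis on $a$ into strict negativity of the minimum, thereby guaranteeing that the admissible region $\{f\geq0\}$ splits into two separated pieces with $G(0)$ trapped in the lower one.
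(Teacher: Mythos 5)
Your proof is correct and follows essentially the same route as the paper's: both hinge on the computation $f(\vartheta)=a-\left(1-\tfrac{1}{q}\right)\vartheta<0$ together with the continuity of $G$ and the intermediate value theorem, which prevents $G$ from ever crossing $\vartheta$. The only difference is cosmetic --- the paper argues by contradiction at the first time $G$ would equal $\vartheta$, while you work out the full sign structure of $f$ (roots $r_0<\vartheta<r_1$) and trap $G(I)$ in $[0,r_0]$ by connectedness, which incidentally yields the marginally stronger bound $G(t)\leq r_0$.
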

\begin{proof}
This lemma was essentially established in \cite{begout}. We present here the
minor modifications in the proof. Since $G$ is continuous and
$G(0)<\vartheta$, there exists $0<\varepsilon<T$ such that $G(t)<\vartheta$,
for all $t\in[0,\varepsilon)$. Assume the lemma is false. By the continuity
of $G$ we then deduce the existence of $t^*\in[\varepsilon,T)$ such that
$G(t^*)=\vartheta$. Thus,
\[
f\circ
G(t^*)=f(\vartheta)=a-\vartheta(1-b\vartheta^{q-1})=a-\vartheta\left(1-\frac{1}{q}\right)<0,
\]
which contradicts the fact that $f\circ G\geq0$. The lemma is thus proved.
\end{proof}

\noindent\textbf{Proof of Theorem  \ref{uniform1}.}

In view of \eqref{energy-estimate} and Lemma \ref{begout}, we define
$G(t)=\|u(t)\|_\hf^2$ and $f(r)=a-r+br^q$, where
$$
a=2E(u_0), \quad b=\frac{2\varrho}{p+2}\|u_0\|_\lt^{(4-p)/2}, \quad {\rm
and}\quad q=\frac{3p}{4}.
$$
It follows from Theorem \ref{local} that $G$ is continuous. Moreover, from
\eqref{energy-estimate} we have $f\circ G\geq0$. Thus, the theorem will be
proved if we can show that $G(0)<\vartheta$, $a< (1-1/q)\vartheta$, where
$\vartheta=(bq)^{-1/(q-1)}$.

Now using \eqref{besng} it is not difficult to check that  $G(0)<\vartheta$
is equivalent to \eqref{cond1}. Moreover, using Lemma \ref{lemsapre} we
deduce that
$$
2E(\ff)=\frac{3p-4}{4-p}\|\ff\|^2_\lt.
$$
Hence, $a< (1-1/q)\vartheta$ is equivalent to \eqref{cond2}. Thus, from Lemma
\ref{begout} we have $G(t)<\vartheta$, which in turn is equivalent to
\eqref{cond3}.

Hence, it is deduced from $\|u(t)\|_\lt=\|u_0\|_\lt$, for all $t\in[0,T)$,
that $u(t)$ is uniformly bounded in $\ihf$ for all $[0,T)$. \fim

\begin{remark}
Note that in the limiting case $p=4/3$, conditions \eqref{cond1} and
\eqref{cond2} in Theorem \ref{uniform1} reduce to the same one, which is
exactly condition \eqref{uni134} in Theorem \ref{uniform}.
\end{remark}

\section*{Acknowledgment}
The first author is partially supported by   a grant from IPM (No. 92470042).
The second author is partially supported by CNPq-Brazil and FAPESP-Brazil.

\end{document}